\documentclass[11pt]{article}
\usepackage{graphicx}
\usepackage{amsmath,amsfonts,amssymb,amsthm}
\title{Reflections of planar convex bodies}
\author{Rolf Schneider}
\date{}
\sloppy
\jot3mm

\oddsidemargin 0.2cm
\evensidemargin 0.2cm
\topmargin 0.4cm
\headheight0cm
\headsep0cm
\textheight23.5cm
\topskip2ex
\textwidth15.5cm
\parskip1ex plus0.5ex minus0.5ex

\newcommand{\Sn}{{\mathbb S}^1}

\newcommand{\ur}{\mbox{\boldmath$u$}}

\newtheorem{lemma}{Lemma}
\newtheorem{theorem}{Theorem}

\begin{document}
\maketitle

\begin{abstract}
It is proved that every convex body in the plane has a point such that the union of the body and its image under reflection in the point is convex. If the body is not centrally symmetric, then it has, in fact, three affinely independent points with this property.
\end{abstract}

\section{Introduction}\label{sec1}

In November 2013, Shiri Artstein--Avidan asked me the following question: `Does every convex body $K$ in the plane have a point $z$ such that the union of $K$ and its reflection in $z$ is convex?' It seemed hard to believe that such a simple question should not have been asked before, and that its answer should be unknown. However, neither a reference nor a counterexample turned up. This note gives a proof. Let us call the point $z$ a {\em convexity point} of $K$ if $K\cup (2z-K)$ is convex. We prove the following stronger result.

\begin{theorem}
A convex body in the plane which is not centrally symmetric has three affinely independent convexity points.
\end{theorem}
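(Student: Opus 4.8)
The plan is to translate convexity of $K\cup(2z-K)$ into a condition on support functions and then into a statement about the zeros of an associated trigonometric function. Write the normal directions as $u(\theta)=(\cos\theta,\sin\theta)$ and let $h(\theta)=h_K(u(\theta))$ be the support function. Since the reflected body has support function $h_{2z-K}(\theta)=2\langle z,u(\theta)\rangle+h(\theta+\pi)$, the convex hull $\mathrm{conv}(K\cup(2z-K))$ has support function $\max\bigl(h(\theta),\,2\langle z,u(\theta)\rangle+h(\theta+\pi)\bigr)$, and the union is convex exactly when it equals this hull. The quantity controlling which body is ``outer'' is
\[
g_z(\theta)=h(\theta)-h(\theta+\pi)-2\langle z,u(\theta)\rangle ,
\]
which satisfies $g_z(\theta+\pi)=-g_z(\theta)$. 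First I would prove a characterization lemma: $z$ is a convexity point iff $g_z$ has no transversal sign change, i.e. at every zero of $g_z$ its derivative vanishes too. Geometrically, a transversal zero produces an edge of the hull whose relative interior lies in neither $K$ nor $2z-K$ (a gap), whereas $g_z(\theta_0)=g_z'(\theta_0)=0$ means the two contact points at normal $u(\theta_0)$ coincide, which happens precisely when $z$ equals $m(\theta_0)$, the midpoint of the chord of $K$ joining the boundary points with outer normals $u(\theta_0)$ and $-u(\theta_0)$; there the two boundaries meet with a common tangent and no gap occurs.

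Next I would differentiate the structure away. Because $2\langle z,u(\theta)\rangle$ is a pure first harmonic, every $g_z$ satisfies the same inhomogeneous equation
\[
g_z''+g_z=\rho(\theta)-\rho(\theta+\pi)=:\phi(\theta),
\]
where $\rho=h+h''$ is the radius-of-curvature function of $\partial K$; conversely $\{g_z:z\in\mathbb R^2\}$ is exactly the family of $2\pi$-periodic solutions of this equation. So the convexity condition says: pick the solution whose sign changes occur only at multiple zeros, which forces each transition direction $\theta_*$ to satisfy $\phi(\theta_*)=0$, i.e. the two boundary points with parallel tangents have equal curvature. The decisive feature of $\phi$ is that it carries no low harmonics: it is antiperiodic, so all even harmonics vanish, and the closure relation $\int_0^{2\pi}\rho(\theta)\,u(\theta)\,d\theta=0$ for a closed convex curve kills the first harmonic. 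Hence, unless $K$ is centrally symmetric (the case $\phi\equiv0$), the lowest harmonic present in $\phi$ is the third.

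The three points then come from the Sturm--Hurwitz theorem: a function, or signed measure to cover non-smooth $K$, whose Fourier expansion starts at the third harmonic has at least $2\cdot3=6$ sign changes on $\Sn$, hence at least three sign-change directions modulo $\pi$. For each such direction $\theta_i$ I would set $z_i=m(\theta_i)$, the midpoint of the corresponding equal-curvature parallel chord, and verify that $z_i$ is a genuine convexity point: the solution $g_{z_i}$ has a triple zero at $\theta_i$ and at $\theta_i+\pi$, and I must show it does not vanish in between, so that it changes sign only there. Finally I would prove that three of these midpoints are affinely independent, arguing that collinearity of all of them would force $\phi$ into a degenerate form incompatible with its harmonic content, equivalently would make $K$ centrally symmetric.

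The main obstacle I anticipate is the verification step: passing from ``$\theta_i$ is a sign change of $\phi$'' to ``$g_{z_i}$ has no intermediate sign change,'' because $\phi$ itself changes sign several times inside $(\theta_i,\theta_i+\pi)$, so the naive Wronskian comparison of $g_{z_i}$ with $\sin(\theta-\theta_i)$ does not close by itself. I expect to need the global Sturm--Hurwitz bound rather than the local sign change, together with the antipodal symmetry $g_{z_i}(\theta+\pi)=-g_{z_i}(\theta)$, to control the interior zeros; and to handle general, possibly non-smooth and non-strictly-convex bodies either by approximation or by replacing $\rho$ with the curvature (surface area) measure and invoking the measure version of Sturm--Hurwitz. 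The triangle, whose three convexity points are precisely the edge midpoints, is the guiding degenerate case and a useful consistency check throughout.
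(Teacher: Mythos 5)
Your first step is sound and, in the smooth strictly convex case, coincides with the paper's Lemma 2: writing $p(\theta)=\tfrac12[h(\theta)-h(\theta+\pi)]$, so that $g_z=2[p(\theta)-\langle z,u(\theta)\rangle]$, the point $z$ is a convexity point exactly when every zero of $g_z$ is also a zero of $g_z'$, i.e.\ when every middle line through $z$ has its midpoint at $z$. Your harmonic analysis of $\phi$ is also correct. The fatal gap is your selection rule: it is not true that a sign change $\theta_i$ of $\phi$ makes the midpoint $z_i=m(\theta_i)$ a convexity point, so the difficulty you flag at the end (``no intermediate sign change'') is not a verification that can be pushed through --- the claim is false. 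Concretely, take $h(\theta)=20+\cos 3\theta+\tfrac1{10}\cos 9\theta$; then $h+h''>0$, so this is a smooth strictly convex body, and $\phi(\theta)=-32\cos 6\theta\cos 3\theta$ has a transversal sign change at $\theta_0=\pi/6$. The associated midpoint is $z=m(\pi/6)=-\tfrac{21}{10}\,u'(\pi/6)$, and with $\psi=\theta-\pi/6$ one computes
\[
\tfrac12 g_z(\theta)=-\sin 3\psi+\tfrac1{10}\sin 9\psi+\tfrac{21}{10}\sin\psi
=\tfrac25\left(7\sin^3\psi-\sin^3 3\psi\right).
\]
Besides the expected triple zeros at $\psi\equiv 0 \pmod\pi$, this vanishes where $4\sin^2\psi=3-7^{1/3}$, i.e.\ at $\psi\approx\pm 0.55$, and there $g_z'\neq 0$ (numerically $g_z'\approx 4.4$). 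So some middle line through $z$ has its midpoint elsewhere, and $z$ is \emph{not} a convexity point even though $\theta_0$ is a sign change of $\phi$.

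The example shows that being a convexity point is a global extremality property of the midpoint curve $m$, not the local property ``$\phi(\theta_0)=0$'' (a cusp of $m$) on which your plan rests: after the perturbation by $\tfrac1{10}\cos 9\theta$, the cusp at $\pi/6$ is no longer on the convex hull of the curve $m$, and other middle lines sweep through it. The paper makes exactly this global selection: it forms $A_K=\mathrm{conv}\bigcup_{u}Z_K(u)$ and proves (Lemma 6) that every \emph{exposed point} of $A_K$ is a convexity point, by showing that the intersection point of $M_K(\ur(\varphi))$ with a supporting line of $A_K$ at the exposed point moves strictly monotonically in $\varphi$, so no unwanted middle line can return through that point. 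The count of three then comes not from Sturm--Hurwitz but from Lemma 5: if $\dim A_K\le 1$, the midpoint condition forces $p+p''=0$, i.e.\ $K$ centrally symmetric; hence for non-symmetric $K$ the set $A_K$ is two-dimensional and has three affinely independent exposed points. A secondary problem is your fallback for non-smooth bodies: approximation does yield \emph{one} convexity point in the limit, but the three affinely independent points obtained for approximating bodies may collapse in the limit, so the full statement does not follow this way; the paper instead splits off a $0$-symmetric summand carrying all pairs of parallel edges (Lemma 3), which transfers convexity points of the remaining summand directly to $K$.
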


In the next section we collect some preparations and at the end explain the idea of the proof. The Theorem is then proved in Section \ref{sec3}.

\section{Some Preparations}\label{sec2}

We work in the Euclidean plane ${\mathbb R}^2$, with scalar product $\langle\cdot,\cdot\rangle$ and unit circle ${\mathbb S}^1$. By $[x,y]$ we denote the closed segment with endpoints $x$ and $y$. The set of convex bodies (nonempty, compact, convex subsets) in ${\mathbb R}^2$ is denoted by ${\mathcal K}^2$. 

\begin{lemma}\label{L1}
For $K,L\in{\mathcal K}^2$, the set $K\cup L$ is convex if and only if
\begin{equation}\label{2.1}
{\rm bd \,conv}\,(K\cup L)\subset K\cup L.
\end{equation}
\end{lemma}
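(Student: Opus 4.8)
The plan is to dispose of the easy implication first and then concentrate on the converse. If $K\cup L$ is convex, then $K\cup L={\rm conv}\,(K\cup L)$, so its boundary is contained in it and \eqref{2.1} holds trivially. For the reverse direction I would set $C:={\rm conv}\,(K\cup L)$ and aim to prove $C\subset K\cup L$; since the inclusion $K\cup L\subset C$ is automatic, this gives $K\cup L=C$ and hence convexity. If $C$ is at most one-dimensional it has empty interior in ${\mathbb R}^2$, so ${\rm bd}\,C=C$ and \eqref{2.1} already reads $C\subset K\cup L$; thus I may assume from the outset that $C$ is two-dimensional.

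Next I would fix $p\in C$ and argue pointwise. If $p\in{\rm bd}\,C$ then $p\in K\cup L$ by hypothesis, so I may assume $p\in{\rm int}\,C$ and, aiming for a contradiction, that $p\notin K\cup L$. Since $K$ and $L$ are compact and convex and $p$ lies in neither, I can strictly separate $p$ from each: there are $u,w\in{\mathbb S}^1$ and constants $\alpha,\beta$ with $\langle x,u\rangle\le\alpha<\langle p,u\rangle$ for all $x\in K$ and $\langle y,w\rangle\le\beta<\langle p,w\rangle$ for all $y\in L$. The open convex set
\[
W:=\{z\in{\mathbb R}^2:\langle z,u\rangle>\alpha,\ \langle z,w\rangle>\beta\}
\]
then contains $p$ and is disjoint from $K\cup L$.

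The decisive step is to leave $C$ while remaining inside $W$. For this I would exhibit a nonzero recession direction of $W$, that is, a vector $v$ with $\langle v,u\rangle\ge 0$ and $\langle v,w\rangle\ge 0$, so that $p+tv\in W$ for every $t\ge 0$. When $u\ne -w$ the choice $v:=u+w$ works, since $\langle u+w,u\rangle=1+\langle u,w\rangle>0$ and symmetrically for $w$; when $u=-w$ the set $W$ is a strip and any $v$ orthogonal to $u$ serves. Because $C$ is compact and $p\in{\rm int}\,C$, the ray $\{p+tv:t\ge 0\}$ must meet ${\rm bd}\,C$ at some $q=p+t_0v$ with $t_0>0$, and the recession property forces $q\in W$. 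But then $q\in{\rm bd}\,C$ while $q\notin K\cup L$, contradicting \eqref{2.1}. Hence $p\in K\cup L$, and the argument is complete.

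The step I expect to be the main obstacle is exactly the construction of this escape ray: one must guarantee an unbounded direction of the wedge $W$ along which the boundary of $C$ is reached without re-entering $K\cup L$. The care needed there is to separate the generic case of two non-opposite normals, where $W$ is an angular wedge, from the degenerate case of antipodal normals, where $W$ degenerates to a strip and the vector $u+w$ vanishes.
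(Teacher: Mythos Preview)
Your proof is correct and takes a genuinely different route from the paper's. The paper first observes that \eqref{2.1} forces $K\cap L\neq\emptyset$ (otherwise a separating line would carry points of ${\rm bd\,conv}(K\cup L)$ lying in neither body), fixes $p\in K\cap L$, and for an arbitrary $c\in[a,b]$ with $a,b\in K\cup L$ shoots the ray from $p$ through $c$ until it hits ${\rm bd\,conv}(K\cup L)$ at some $q$; since $q\in K\cup L$ by \eqref{2.1} and $p$ lies in both bodies, the segment $[p,q]$ is contained in whichever of $K,L$ contains $q$, so $c\in K\cup L$. You instead separate a hypothetical bad interior point from each of $K$ and $L$, intersect the two open halfplanes to get a wedge $W$ disjoint from $K\cup L$, and travel along a recession direction of $W$ to reach ${\rm bd}\,C$ while remaining outside $K\cup L$. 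The paper's argument is a little shorter and avoids your case split on whether the two normals are antipodal, at the cost of the preliminary nonseparation step; your argument bypasses the need to know $K\cap L\neq\emptyset$ and makes the geometry of the obstruction (an explicit convex region missing $K\cup L$) more visible. Both approaches carry over to ${\mathbb R}^n$ without essential change.
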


\begin{proof}Suppose that (\ref{2.1}) holds. Let $a,b\in K\cup L$ and $c \in [a,b]$; then $c\in {\rm conv}(K\cup L)$. Because of (\ref{2.1}), $K$ and $L$ cannot be strongly separated by a line, hence $K\cap L\not=\emptyset$. Let $p\in K\cap L$. If $p=c$, then $c\in K\cup L$. If $p\not=c$, the ray $\{p+\lambda(c-p):\lambda\ge 0\}$ meets ${\rm bd \; conv}(K\cup L)$ in a point $q$ such that $c\in[p,q]$. Then $q\in K\cup L$ by (\ref{2.1}), hence $c\in K\cup L$. Thus, $K\cup L$ is convex. If, conversely, $K\cup L$ is convex, then (\ref{2.1}) holds trivially. 
\end{proof}

For $K\in{\mathcal K}^2$ and $u\in{\mathbb S}^1$, let $H(K,u)$ be the supporting line of $K$ with outer unit normal vector $u$. The line
$$ M_K(u):= \frac{1}{2}[H(K,u)+H(K,-u)]$$
is called the {\em middle line} of $K$ with normal vector $u$. By $F(K,u):=K \cap H(K,u)$ we denote the face of $K$ with outer normal vector $u$. The convex set (point or segment)
$$ Z_K(u):= \frac{1}{2}[F(K,u)+F(K,-u)]$$
is called the {\em middle set} of $K$ with normal vector $u$. Thus, $M_K(u)=M_K(-u)$, $Z_K(u)=Z_K(-u)$, and $Z_K(u) \subset M_K(u)$.

By an {\em edge} of $K\in{\mathcal K}^2$ we mean a one-dimensional face of $K$.

\begin{lemma}\label{L2}
Let $K\in{\mathcal K}^2$ and suppose that the boundary of $K$ does not contain two parallel edges. Let $z\in{\mathbb R}^2$. Then $z$ is a convexity point of $K$ if and only if 
\begin{equation}\label{2.2}
\forall\, u\in{\mathbb S}^1:\;z\in M_K(u)\; \Rightarrow\; z\in Z_K(u).
\end{equation}
\end{lemma}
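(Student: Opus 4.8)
The plan is to set $L:=2z-K$ and $C:=\mathrm{conv}(K\cup L)$, and to decide convexity of $K\cup L$ through Lemma \ref{L1}, i.e.\ by testing whether $\mathrm{bd}\,C\subset K\cup L$. Writing $h(K,\cdot)$ for the support function of $K$, reflection in $z$ gives the two identities $h(L,u)=2\langle z,u\rangle+h(K,-u)$ and, for the faces, $F(L,u)=2z-F(K,-u)$; both follow directly from the definitions, and I would record them at the outset since everything else is read off from them.

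The heart of the matter is a pair of translations. First, $z\in M_K(u)$ if and only if $h(K,u)=h(L,u)$, i.e.\ if and only if $K$ and $L$ share the supporting line $H(K,u)=H(L,u)$ with outer normal $u$: since $M_K(u)=\{x:\langle x,u\rangle=\tfrac12(h(K,u)-h(K,-u))\}$, substituting the formula for $h(L,u)$ turns the equation $h(K,u)=h(L,u)$ into exactly $\langle z,u\rangle=\tfrac12(h(K,u)-h(K,-u))$. Second, $z\in Z_K(u)$ if and only if $F(K,u)\cap F(L,u)\neq\emptyset$: a representation $z=\tfrac12(a+b)$ with $a\in F(K,u)$, $b\in F(K,-u)$ is the same datum as a point $a\in F(K,u)$ with $a=2z-b\in 2z-F(K,-u)=F(L,u)$. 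Because $Z_K(u)\subset M_K(u)$, these two facts already put (\ref{2.2}) in the form I want to test.

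With these in hand I would split $\mathrm{bd}\,C=\bigcup_u F(C,u)$ according to the outer normal $u$. If $h(K,u)\neq h(L,u)$, then $F(C,u)$ equals $F(K,u)$ or $F(L,u)$ and so already lies in $K\cup L$; such directions are harmless. The only delicate directions are those with $h(K,u)=h(L,u)$, equivalently $z\in M_K(u)$: here $H:=H(K,u)=H(L,u)$ supports $C$, so $F(C,u)=C\cap H=\mathrm{conv}\big(F(K,u)\cup F(L,u)\big)$, while $(K\cup L)\cap H=F(K,u)\cup F(L,u)$. Since $F(K,u)$ and $F(L,u)$ are (possibly degenerate) segments on the one line $H$, their convex hull equals their union exactly when they meet; hence $F(C,u)\subset K\cup L$ iff $F(K,u)\cap F(L,u)\neq\emptyset$, i.e.\ by the second translation iff $z\in Z_K(u)$. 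Feeding this into Lemma \ref{L1} gives the statement: $K\cup L$ is convex iff $z\in Z_K(u)$ holds for every $u$ with $z\in M_K(u)$, which is precisely (\ref{2.2}). I expect the main obstacle to be exactly this bookkeeping at the critical directions, namely justifying $F(C,u)=\mathrm{conv}\big(F(K,u)\cup F(L,u)\big)$ from the fact that $H$ supports $C$, and arguing that two segments on one line have convex union iff they intersect, so that a genuine gap produces a boundary point of $C$ lying outside $K\cup L$.

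Finally, the no-parallel-edges hypothesis is what keeps the middle sets well-behaved: it forces at most one of $F(K,u)$, $F(K,-u)$ to be a segment, so that $Z_K(u)$ is either a point or half of a single edge, never the Minkowski average of two genuine edges. I would use it chiefly to keep the case analysis at the critical directions clean and the description of $Z_K(u)$ one-dimensional; the essential engine of the equivalence is the gap criterion above.
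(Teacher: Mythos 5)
Your proof is correct, but it is organized differently from the paper's, and in one respect it proves more. The paper splits the equivalence into two separate implications: for the ``only if'' direction it takes $x\in F(K,u)$ and $y\in F(L,u)$ on the common support line and uses connectedness of the segment $[x,y]\subset K\cup L$ to produce a point of $F(K,u)\cap F(L,u)$; for the ``if'' direction it argues by contradiction, taking a point $c\in{\rm bd\,conv}(K\cup L)\setminus(K\cup L)$ and invoking the no-parallel-edges hypothesis to ensure that at least one of $F(K,u)$, $F(L,u)$ is a singleton, which is what makes $z\in Z_K(u)$ force $c\in K$ or $c\in L$. You instead determine, for every outer normal $u$, exactly when the face $F(C,u)$ of $C={\rm conv}(K\cup L)$ lies in $K\cup L$: automatically when $h(K,u)\neq h(L,u)$, and precisely when $F(K,u)\cap F(L,u)\neq\emptyset$ (equivalently, by your second translation, $z\in Z_K(u)$) at the critical directions, since two compact intervals on a line have convex union if and only if they meet. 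This face-by-face bookkeeping costs you two facts you rightly flag as needing justification, namely ${\rm bd}\,C=\bigcup_u F(C,u)$ and $F(C,u)={\rm conv}\left(F(K,u)\cup F(L,u)\right)$ at a common support line; both are easily verified by writing points of $C$ as $\lambda k+(1-\lambda)l$ with $k\in K$, $l\in L$, so there is no gap. What it buys is a genuine strengthening: your argument never uses the hypothesis that $K$ has no pair of parallel edges, because the gap criterion handles two parallel segments exactly as easily as a segment and a point, so the lemma as you prove it holds for arbitrary $K\in{\mathcal K}^2$. Your closing paragraph, which says the hypothesis is needed to keep the case analysis clean, therefore undersells your own proof: it is not needed at all in your argument (the paper needs it here only because of how its contradiction is run, and needs it essentially in the later lemmas). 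The paper's route, by contrast, avoids any structure theory for faces of convex hulls of unions, at the price of the extra hypothesis and a proof by contradiction.
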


\begin{proof} We write $2z-K=:L$. Suppose, first, that $z$ is a convexity point of $K$ , thus $K\cup L$ is convex. Let $u\in{\mathbb S}^1$ be such that $z\in M_K(u)$. Then $H(K,u)$ and $H(K,-u)$ are common support lines of $K$ and $L$. Choose $x\in F(K,u)$ and $x'\in F(K,-u)$. Then  $y:= 2z-x'\in  F(L,u)$. We have $[x,y]\subset{\rm bd\,conv}(K\cup L)$ and hence $[x,y]\subset K\cup L$, by Lemma \ref{L1}. Therefore, there is a point $c\in[x,y]\cap K\cap L\cap H(K,u)$. In particular, $c\in F(K,u)$. Since also $c\in F(L,u)$, we have $2z-c\in F(K,-u)$. This gives 
$$z=\frac{1}{2}c+ \frac{1}{2}(2z-c) \in \frac{1}{2}F(K,u) +\frac{1}{2}F(K,-u)= Z_K(u),$$ 
as stated.

Now assume that (\ref{2.2}) holds, and suppose that $z$ is not a convexity point of $K$. Since $K\cup L$ is not convex, by Lemma \ref{L1} there exists a point $c\in {\rm bd\; conv}\,(K\cup L)\setminus (K\cup L)$. The point $c$ lies in a common support line $H(K,u)$ of $K$ and $L$, for suitable $u\in{\mathbb S}^1$. Therefore, $2z-H(K,u)$ supports $K$, hence $z\in M_K(u)$. By (\ref{2.2}), this implies that $z\in Z_K(u)$.  Moreover, $c\in [x,y]$ for suitable $x\in F(K,u)$ and $y\in F(L,u)$. By the assumption of the lemma, at least one of the sets $F(K,u)$, $F(L,u)$ is one-pointed. Suppose, first, that $F(L,u)=\{y\}$. Then the point $x':=2z-y$ satisfies $\{x'\}=F(K,-u)$. Since $z\in Z_K(u)$, there is a point $x^*\in F(K,u)$ with $z=\frac{1}{2}(x^*+x')$. This gives $y=x^*$ and hence $y\in K$, thus $c\in K$, a contradiction. Second, suppose that $F(K,u)=\{x\}$. Then the point $y':=2z-x$ satisfies  $\{y'\}=F(L,-u)$. Since $z\in Z_K(u)=2z-Z_L(u)$, we have $z\in Z_L(u)=Z_L(-u)$, hence there is a point $y^*\in F(L,u)$ with $z=\frac{1}{2}(y^*+y')$. This gives $x=y^*$ and hence $x\in L$, thus $c\in L$, again a contradiction. Thus, $z$ must be a convexity point of $K$.
\end{proof}

We show that the assumption on $K$ in Lemma \ref{L2} is not a restriction for the proof of the Theorem.

\begin{lemma}\label{L3}
If the statement of the Theorem holds under the additional assumption that $K$ has no pair of parallel edges, then it holds also without this assumption.
\end{lemma}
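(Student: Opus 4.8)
The plan is to approximate $K$ by convex bodies that fall under Lemma~\ref{L2} and to pass to the limit. By a standard approximation theorem there is a sequence $(K_j)_{j\in\mathbb N}$ of strictly convex bodies with $K_j\to K$ in the Hausdorff metric. A strictly convex body contains no segment in its boundary, hence has no edge at all, and in particular no pair of parallel edges. Moreover, the class of centrally symmetric bodies is closed under Hausdorff convergence: the center of symmetry of $K_j$ lies in $K_j$, the $K_j$ are uniformly bounded, so along a subsequence the centers converge and the limit would be a center of symmetry of $K$. Since $K$ is not centrally symmetric, all but finitely many $K_j$ fail to be centrally symmetric; discarding the exceptions, I may assume that every $K_j$ is not centrally symmetric. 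The assumed form of the Theorem then provides, for each $j$, three affinely independent convexity points $z_j^1,z_j^2,z_j^3$ of $K_j$.

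Two facts let me take limits. First, convexity points are bounded: if $z$ is a convexity point of a body $N$, then $N\cup(2z-N)$ is convex and hence connected, so $N\cap(2z-N)\neq\emptyset$; a common point $x$ satisfies $x\in N$ and $2z-x\in N$, whence $z=\frac12\bigl(x+(2z-x)\bigr)\in\frac12(N+N)=N$. Thus $z_j^i\in K_j$, and as $K_j\to K$ these points all lie in a fixed bounded set. Second, the convexity relation is closed under joint convergence: if $w_j$ is a convexity point of $K_j$ and $w_j\to w$, then $2w_j-K_j\to 2w-K$, hence $K_j\cup(2w_j-K_j)\to K\cup(2w-K)$ in the Hausdorff metric (finite unions depend continuously on their terms), and a Hausdorff limit of convex sets is convex; by Lemma~\ref{L1} this means $w$ is a convexity point of $K$. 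Passing to a subsequence, $z_j^i\to z^i$ for $i=1,2,3$, and each $z^i$ is a convexity point of $K$.

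It remains to show that $z^1,z^2,z^3$ are affinely independent, and this is the step I expect to be the main obstacle. The triangles $z_j^1z_j^2z_j^3$ all have positive area, but nothing said so far keeps these areas bounded away from $0$, so the limit triple could be collinear or even collapse to a single point. The qualitative hypothesis alone cannot exclude this: the upper semicontinuity just established only forces the convexity points of $K_j$ into an arbitrarily thin strip around any line carrying the limits, and such a strip still admits affinely independent triples. To overcome this I would exploit the extra structure contained in the \emph{necessary} direction of Lemma~\ref{L2}, namely the implication ``$z\in M_K(u)\Rightarrow z\in Z_K(u)$'', which holds for \emph{every} convex body $K$ (its proof nowhere uses the absence of parallel edges), together with the observation that a middle set $Z_K(u)$ is larger than a single point precisely when $F(K,u)$ and $F(K,-u)$ are parallel edges. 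Writing $C(\cdot)$ for the set of convexity points, the idea is that strictly convex approximants, whose middle sets are all singletons, have a ``thin'' set $C(K_j)$ that should remain anchored to the affine structure of $C(K)$ rather than clustering two-dimensionally near one point.

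Concretely I would replace the crude strictly convex approximation by a perturbation that tilts only the at most countably many parallel edges of $K$ by small, mutually generic angles, keeping the body close to $K$ while destroying every parallel pair. For each $j$ I would then select a triple of convexity points of $K_j$ maximizing the area of the spanned triangle, and try to show, using the anchoring to the singleton middle sets, that these maximal areas do not tend to $0$. Establishing this uniform lower bound — equivalently, the lower semicontinuity of the affine dimension of the convexity set along the approximating sequence — is the crux on which the whole reduction hinges; once it is in hand, the limit $z^1,z^2,z^3$ is a nondegenerate triangle and the Theorem follows for $K$.
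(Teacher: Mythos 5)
Your limiting framework is correct as far as it goes: convexity points of a body lie in that body, the convexity-point relation is closed under joint Hausdorff convergence (via Lemma \ref{L1}), and non-central-symmetry survives approximation, so you do obtain limit points $z^1,z^2,z^3$ that are convexity points of $K$. But, as you yourself identify, the argument has a genuine hole at exactly the decisive step: nothing prevents the triples $z_j^1,z_j^2,z_j^3$ from collapsing onto a line, or onto a single point, as $j\to\infty$. Your proposed repair --- tilting the parallel edges by generic small angles and selecting area-maximizing triples --- is a programme, not a proof: no mechanism is offered that bounds the maximal triangle area away from $0$ along the sequence, and the ``anchoring'' you invoke is precisely the lower semicontinuity statement that would need to be proved; it is not visibly easier than Lemma \ref{L3} itself. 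So the proposal does not establish the lemma.

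The paper avoids taking limits of points altogether, by a structural decomposition rather than an approximation. Each pair of parallel edges of $K$ yields a $0$-symmetric segment $S_i$ that is a Minkowski \emph{summand} of $K$; summing these segments (with a Hausdorff limit if there are infinitely many) gives a $0$-symmetric convex body $T$ and a decomposition $K=C+T$ in which $C$ has no pair of parallel edges. Since $T=-T$, for any point $z$ one has
$$ K\cup(2z-K)=(C+T)\cup(2z-C+T)=\left[C\cup(2z-C)\right]+T, $$
and the Minkowski sum of a convex set with a convex body is convex; hence every convexity point of $C$ is a convexity point of $K$. Moreover $C$ cannot be centrally symmetric, since otherwise $K=C+T$ would be. So the three affinely independent convexity points supplied by the restricted Theorem applied to $C$ serve, unchanged, as convexity points of $K$: affine independence transfers trivially because the points themselves do not move. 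This is the idea your approach is missing --- pass the problem to a summand of $K$ rather than to an approximating sequence, so that no degeneration in a limit can occur.
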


\begin{proof} 
Let $K\in{\mathcal K}^2$ be an arbitrary convex body. To each pair of parallel edges of $K$, there exists a $0$-symmetric segment $S$ such that one of the edges  is a translate of $S$ and the other edge contains a translate of $S$. Then $S$ is a summand of $K$ (e.g., \cite{Sch14}, Thm. 3.2.11), and there exists a convex body $C\in{\mathcal K}^2$ such that $K=C+S$, and $C$ has no pair of edges parallel to $S$. Let $S_1,S_2,\dots$ be the (finite or infinite) sequence of segments obtained in this way. Since the boundary of a planar convex body contains at most countably many segments, we can assume that the sequence $S_1,S_2,\dots$ is exhausting, that is, to each pair of parallel segments in the boundary of $K$, the shortest of the two segments is a translate of $S_i$, for suitable $i$. If there are $m$ such segments, then there is a convex body $C_m\in{\mathcal K}^2$ such that
\begin{equation}\label{1n} 
K= C_m+\sum_{i=1}^m S_i,
\end{equation}
and $C_m$ has no pair of edges parallel to one of the segments $S_1,\dots,S_m$. If there are precisely $m$ segments, we put $C_m=:C$ and $\sum_{i=1}^m S_i=:T$. If, however, the sequence $S_1,S_2,\dots$ is infinite, then the sequence $(\sum_{i=1}^m S_i)_{m\in{\mathbb N}}$ is bounded and increasing under inclusion, hence it converges (in the Hausdorff metric) to a convex body $T$, which is $0$-symmetric. From (\ref{1n}) it follows that the sequence $(C_m)_{m\in{\mathbb N}}$ converges to a convex body $C$ and that $K=C+T$. We claim that $C$ has no pair of parallel edges. In fact, suppose that there is some $u\in\Sn$ such that $F(C,u)$ and $F(C,-u)$ are segments of length at least $\ell>0$. Since $F(K,u)=F(C,u)+F(T,u)$ (see \cite{Sch14}, Thm. 1.7.5), the faces $F(K,\pm u)$ have length at least $\ell$. But then the  faces $F(T,\pm u)$ have length at least $\ell$ (by the construction of $T$), hence the faces $F(K,\pm u)$ have length at least $2\ell$. This leads to a contradiction.

If we now assume that the Theorem holds for convex bodies without a pair of parallel edges, then it holds for $C$. Thus,
for any point $z$ for which the union $C\cup(2z-C)$ is convex, the set
\begin{eqnarray*}
K\cup(2z-K) &=& (C+T)\cup (2z-C-T)=(C+T)\cup (2z-C+T)\\
&=& [C\cup (2z-C)]+T
\end{eqnarray*}
is convex. Thus, $z$ is also a convexity point of $K$. This completes the proof of Lemma \ref{L3}.
\end{proof}

For the proof of the Theorem, we consider the convex body
\begin{equation}\label{2.3} 
A_K:= {\rm conv} \bigcup_{u\in{\mathbb S}^1} Z_K(u).
\end{equation}
We shall show that each exposed point of $A_K$ is a convexity point of $K$, and that $A_K$ is two-dimensional if $K$ does not have a centre of symmetry.

\section{Proof of the Theorem}\label{sec3}

In the following, we assume that $K\in{\mathcal K}^2$ is a convex body such that the boundary of $K$ does not contain two parallel edges. As just seen, it is sufficient to prove the Theorem  for bodies satisfying this assumption.

We choose an orthormal basis $(e_1,e_2)$ of ${\mathbb R}^2$. For $\varphi\in{\mathbb R}$, we define
$$ \ur(\varphi):= (\cos\varphi)e_1+(\sin\varphi)e_2,$$
then $\ur'(\varphi)= (-\sin\varphi)e_1+(\cos\varphi)e_2$, and $(\ur(\varphi),\ur'(\varphi))$ is an orthonormal frame with the same orientation as $(e_1,e_2)$. In general, if $u\in{\mathbb S}^1$, we denote by $u'\in{\mathbb S}^1$ the unit vector such that $(u,u')$ has the  same orientation as $(e_1,e_2)$.

The support function $h_K$ of $K$ is given by $h_K(u)=\max\{\langle u,x\rangle:x\in K\}$ for $u\in{\mathbb R}^2$. We define
$$ h(\varphi):= h_K(\ur(\varphi))$$
and
\begin{equation}\label{3.0} 
p(\varphi) := \frac{1}{2}\left[h(\varphi)-h(\varphi +\pi)\right]
\end{equation}
for $\varphi\in{\mathbb R}$. Then
\begin{equation}\label{3.00}  
M_K(\ur(\varphi)) =\{x\in{\mathbb R}^2:\langle x,\ur(\varphi)\rangle =p(\varphi)\}.
\end{equation}

If a face $F(K,u)$ is one-pointed, we write $F(K,u)=\{x_K(u)\}$. For given $u\in\Sn$, at least one of the faces $F(K,u),F(K,-u)$ is one-pointed. Suppose, first, that $F(K,-u) =\{x_K(-u)\}$. The face $F(K,u)$ is a (possibly degenerate) segment, which we write as $F(K,u) = [a_K(u),b_K(u)]$, where the notation is chosen so that $b_K(u)-a_K(u)=\lambda u'$ with $\lambda\ge 0$. We set
$$ s_K(u) := \frac{1}{2}[a_K(u)+x_K(-u)],\qquad t_K(u):=\frac{1}{2}[b_K(u)+x_K(-u)].$$
If, second, $F(K,u) =\{x_K(u)\}$ is one-pointed, we write $F(K,-u) = [c_K(u),d_K(u)]$, where the notation is chosen so that $d_K(u)-c_K(u)=-\lambda u'$ with $\lambda\ge 0$, and we set
$$ s_K(u) := \frac{1}{2}[c_K(u)+x_K(u)],\qquad t_K(u):=\frac{1}{2}[d_K(u)+x_K(u)].$$
Then 
$$ Z_K(u) = [s_K(u),t_K(u)].$$
Of course, $s_K(u)=t_K(u)$ if $F(K,u)$ and $F(K,-u)$ are both one-pointed. This holds if the support function $h_K$ is differentiable at $u$ and at $-u$.

The right and left derivatives of the function $p$ at $\varphi$ are denoted by $p'_r(\varphi)$ and $p'_l(\varphi)$, respectively. They exist, since support functions have directional derivatives. 

\begin{lemma}\label{L4}
With the notations introduced above, we have
\begin{equation}\label{3.1}
p'_r(\varphi)=\langle t_K(\ur(\varphi)),\ur'(\varphi)\rangle
\end{equation}
and
\begin{equation}\label{3.2}
p'_l(\varphi)=\langle s_K(\ur(\varphi)),\ur'(\varphi)\rangle.
\end{equation}
Therefore,
\begin{equation}\label{3.3}
t_K(\ur(\varphi))= p(\varphi)\ur(\varphi)+p_r'(\varphi)\ur'(\varphi)
\end{equation}
and
\begin{equation}\label{3.4}
s_K(\ur(\varphi))= p(\varphi)\ur(\varphi)+p_l'(\varphi)\ur'(\varphi).
\end{equation}
\end{lemma}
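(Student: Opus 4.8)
The plan is to connect the middle-line position $p(\varphi)$ with the support function via the known formula for support-function derivatives, and then read off where the middle set lies. The key classical fact is that for a support function $h$, the right and left derivatives $h'_r(\varphi)$ and $h'_l(\varphi)$ in the direction $\ur'(\varphi)$ recover the endpoints of the face $F(K,\ur(\varphi))$: specifically, the boundary point of $K$ in direction $\ur(\varphi)$ is $h(\varphi)\ur(\varphi)+h'(\varphi)\ur'(\varphi)$, and when the face is a nondegenerate segment the two one-sided derivatives pick out its two endpoints. I would first record this in the precise form needed here, namely that $\langle b_K(\ur(\varphi)),\ur'(\varphi)\rangle=h'_r(\varphi)$ and $\langle a_K(\ur(\varphi)),\ur'(\varphi)\rangle=h'_l(\varphi)$, with the convention $b_K-a_K=\lambda\ur'$, $\lambda\ge 0$, already fixed in the excerpt. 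This is just the statement that the support line rotates counterclockwise as $\varphi$ increases, so the contact point sweeps from the $a$-endpoint to the $b$-endpoint.

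Next I would differentiate the defining relation \eqref{3.0}, $p(\varphi)=\tfrac12[h(\varphi)-h(\varphi+\pi)]$. Since one-sided derivatives are additive, $p'_r(\varphi)=\tfrac12[h'_r(\varphi)-h'_r(\varphi+\pi)]$ and similarly for the left derivative. Here one must be careful with the shift by $\pi$: because $\ur(\varphi+\pi)=-\ur(\varphi)$ and $\ur'(\varphi+\pi)=-\ur'(\varphi)$, the orientation of the tangent direction reverses, and a right derivative at $\varphi$ corresponds to a right derivative at $\varphi+\pi$ in the parametrization used. I would then substitute the face-endpoint formulas for $h'_r$ and $h'_l$ at both $\varphi$ and $\varphi+\pi$, and match them against the definitions of $s_K(u)$ and $t_K(u)$, which are the averages of an endpoint of $F(K,u)$ with an endpoint of $F(K,-u)$.

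The main obstacle, and the step requiring genuine care, is the bookkeeping across the two cases in the definition of $s_K$ and $t_K$ and the sign reversal at $\varphi+\pi$. In the excerpt, the labelling of which face is one-pointed determines whether $s_K,t_K$ are built from $a_K(u),b_K(u)$ together with $x_K(-u)$, or from $c_K(u),d_K(u)$ together with $x_K(u)$; and the convention $d_K-c_K=-\lambda\ur'$ is deliberately opposite in sign to the first case. The point of \eqref{3.1}--\eqref{3.2} is precisely that, after these sign choices, the two cases merge into one clean pair of formulas. I expect the verification to reduce to checking that the $\ur(\varphi)$-component of both $s_K$ and $t_K$ equals $p(\varphi)$ (immediate, since $Z_K(u)\subset M_K(u)$ and \eqref{3.00} says $M_K(\ur(\varphi))$ is the line $\langle\cdot,\ur(\varphi)\rangle=p(\varphi)$), while the $\ur'(\varphi)$-component equals $p'_r(\varphi)$ for $t_K$ and $p'_l(\varphi)$ for $s_K$. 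Once \eqref{3.1} and \eqref{3.2} are established, the identities \eqref{3.3} and \eqref{3.4} follow by simply decomposing $t_K(\ur(\varphi))$ and $s_K(\ur(\varphi))$ in the orthonormal frame $(\ur(\varphi),\ur'(\varphi))$ and inserting the two component values.
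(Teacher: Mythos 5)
Your proposal is correct and follows essentially the same route as the paper's proof: one-sided differentiation of (\ref{3.0}), the identification $h'_r(\varphi)=\langle b_K(\ur(\varphi)),\ur'(\varphi)\rangle$ and $h'_l(\varphi)=\langle a_K(\ur(\varphi)),\ur'(\varphi)\rangle$, the sign bookkeeping at $\varphi+\pi$ merging the two cases in the definitions of $s_K,t_K$, and the frame decomposition giving (\ref{3.3})--(\ref{3.4}) from (\ref{3.00}). The only difference is one of citation depth: the fact you quote as classical is exactly what the paper takes the trouble to derive from the directional-derivative theorem (\cite{Sch14}, Thm.~1.7.2) via the computation $\ur(\varphi)+\lambda\ur'(\varphi)=\sqrt{1+\lambda^2}\,\ur(\varphi+\arctan\lambda)$, since the one-sided derivative of the restriction of $h_K$ to the unit circle is not literally a directional derivative of $h_K$ and the homogeneity correction must be checked to vanish.
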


\begin{proof} For the directional derivatives of the support function $h_K$, we see from \cite{Sch14}, Thm. 1.7.2, that, for $u\in{\mathbb S}^1$,
\begin{eqnarray*}
h_K'(u; u') &=& h_{F(K,u)}(u') =\langle b_K(u),u'\rangle,\\
h_K'(u; -u') &=& h_{F(K,u)}(-u') =\langle a_K(u),-u'\rangle.
\end{eqnarray*}
By definition,
$$ h_K'(\ur(\varphi); \ur'(\varphi)) = \lim_{\lambda\downarrow 0} \frac{h_K(\ur(\varphi)+\lambda\ur'(\varphi)) -h_K(\ur(\varphi))}{\lambda}.$$
Here,
$$ \ur(\varphi)+\lambda\ur'(\varphi) = \sqrt{1+\lambda^2}\,\ur(\varphi+\arctan\lambda)$$
and hence
\begin{eqnarray*} 
& & h_K'(\ur(\varphi); \ur'(\varphi))\\
&&=  \lim_{\lambda\downarrow 0} \frac{\sqrt{1+\lambda^2}\, h(\varphi+\arctan\lambda) -h(\varphi)}{\lambda}\\
&&= \lim_{\lambda\downarrow 0} \left[\frac{h(\varphi+\arctan\lambda) -h(\varphi)}{\arctan\lambda}\,\frac{\arctan\lambda}{\lambda} +\frac{\sqrt{1+\lambda^2}-1}{\lambda}\,h(\varphi+\arctan\lambda)\right]\\
&&= h_r'(\varphi),
\end{eqnarray*}
where $h'_r$ denotes the right derivative. Thus,
$$ h'_r(\varphi)= \langle b_K(\ur(\varphi)),\ur'\vspace{2mm}(\varphi)\rangle.$$
If $F(K,-\ur(\varphi))$ is one-pointed, we have
$$ h'(\varphi+\pi) =\langle x_K(-\ur(\varphi)),-\ur'(\varphi)\rangle.$$ 
Both equations together yield
$$ p_r'(\varphi) = \frac{1}{2}\left[h'_r(\varphi)-h'(\varphi+\pi)\right] =\frac{1}{2}\left[\langle b_K(\ur(\varphi)), \ur'(\varphi)\rangle-\langle x_K(-\ur(\varphi)),-\ur'(\varphi)\rangle\right],$$
thus
\begin{equation}\label{3.5}
p'_r(\varphi)=\langle t_K(\ur(\varphi)),\ur'(\varphi)\rangle.
\end{equation}
If $F(K,\ur(\varphi))$ is one-pointed, then
$$ h'_r(\varphi+\pi) =\langle d_K(\ur(\varphi)),-\ur'(\varphi)\rangle,\qquad h'(\varphi)=\langle x_K(\ur(\varphi)), \ur'(\varphi)\rangle,$$
which again gives (\ref{3.5}). 

For the left derivative, we obtain in a similar way that
\begin{equation}\label{}
p'_l(\varphi)=\langle s_K(\ur(\varphi)),\ur'(\varphi)\rangle.
\end{equation}

The representations (\ref{3.3}) and (\ref{3.4}) are clear from equations (\ref{3.00}), (\ref{3.1}) and (\ref{3.2}).
This completes the proof of Lemma \ref{L4}.
\end{proof}

We use Lemma \ref{L4} to prove the following characterization of centrally symmetric convex bodies in the plane.

\begin{lemma}\label{L5}
Suppose that $K\in{\mathcal K}^2$ has no pair of parallel edges. If $\dim A_K\le 1$, then $K$ is centrally symmetric.
\end{lemma}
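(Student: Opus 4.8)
The plan is to translate the statement into a condition on the scalar function $p$ and then solve a linear differential equation. First I would record the elementary reduction: $K$ is centrally symmetric with centre $z$ if and only if $h_K(u)-h_K(-u)=2\langle z,u\rangle$ for all $u\in\Sn$, which by (\ref{3.0}) and the $2\pi$-periodicity of $h$ is equivalent to
$$p(\varphi)=\langle z,\ur(\varphi)\rangle\quad\text{for all }\varphi\in{\mathbb R}.$$
Thus the entire lemma reduces to showing that the hypothesis $\dim A_K\le 1$ forces $p$ to be of this pure first-order form for some fixed $z$. I would also note the antiperiodicity $p(\varphi+\pi)=-p(\varphi)$, which is immediate from (\ref{3.0}), and which will be used to glue things together at the end.

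If $A_K$ is a single point $\{z\}$, then $s_K(\ur(\varphi))=t_K(\ur(\varphi))=z$ for every $\varphi$, and taking the inner product of (\ref{3.3}) with $\ur(\varphi)$ gives $p(\varphi)=\langle z,\ur(\varphi)\rangle$ at once. So the substantial case is $\dim A_K=1$, where $A_K$ is a segment lying on a unique line $\ell=\{x:\langle x,\ur(\alpha)\rangle=c\}$. Since $Z_K(\ur(\varphi))=[s_K(\ur(\varphi)),t_K(\ur(\varphi))]\subset A_K\subset\ell$, I would substitute the representations (\ref{3.3}) and (\ref{3.4}) into the equation of $\ell$. Using $\langle\ur(\varphi),\ur(\alpha)\rangle=\cos(\varphi-\alpha)$ and $\langle\ur'(\varphi),\ur(\alpha)\rangle=-\sin(\varphi-\alpha)$, this yields the two relations
$$p(\varphi)\cos(\varphi-\alpha)-p_r'(\varphi)\sin(\varphi-\alpha)=c,\qquad p(\varphi)\cos(\varphi-\alpha)-p_l'(\varphi)\sin(\varphi-\alpha)=c$$
for all $\varphi$. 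Subtracting them shows $(p_r'-p_l')\sin(\varphi-\alpha)=0$, so $p$ can fail to be differentiable only at $\varphi\in\{\alpha,\alpha+\pi\}$.

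On each of the two open arcs determined by these directions, $p$ is $C^1$ and satisfies the linear equation $p(\varphi)\cos(\varphi-\alpha)-p'(\varphi)\sin(\varphi-\alpha)=c$. Writing $\psi=\varphi-\alpha$ and observing that $\frac{\D}{\D\psi}\!\left(p/\sin\psi\right)=-c/\sin^2\psi$ wherever $\sin\psi\neq0$, I obtain $p(\varphi)=c\cos(\varphi-\alpha)+A\sin(\varphi-\alpha)$ on each arc, with a constant $A$ a priori depending on the arc. The relation $p(\varphi+\pi)=-p(\varphi)$ forces the two constants to coincide, so by continuity of $p$ this single first-order expression is valid for all $\varphi$. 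Hence $p(\varphi)=\langle z,\ur(\varphi)\rangle$ with $z=c\,\ur(\alpha)+A\,\ur'(\alpha)$, and $K$ is centrally symmetric. As a by-product $p$ turns out to be smooth, so $K$ has no edges at all, which is exactly what central symmetry together with the no-parallel-edges hypothesis predicts.

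The main obstacle is the careful handling of the corners of $p$. I must justify that $p$ solves the stated ODE only on the open arcs (not across the exceptional directions $\alpha,\alpha+\pi$), solve that ODE honestly via the integrating factor $1/\sin\psi$, and then splice the two arcs together using the antiperiodicity $p(\varphi+\pi)=-p(\varphi)$, rather than assuming global differentiability from the outset. The rest is a direct reading of the representations from Lemma \ref{L4}.
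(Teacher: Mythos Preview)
Your argument is correct and follows essentially the same route as the paper's: both use Lemma~\ref{L4} to convert the hypothesis $Z_K(u)\subset\ell$ into a linear ODE for $p$, solve it, and conclude that $p(\varphi)=\langle z,\ur(\varphi)\rangle$. The only cosmetic differences are that the paper first normalizes $\ell$ to pass through the origin perpendicular to $e_1$ (your $c=0$, $\alpha=0$), obtaining the homogeneous equation $p\cos\varphi-p'\sin\varphi=0$ and bootstrapping to $p+p''=0$ on $(0,\pi)$, whereas you keep the general line, solve the inhomogeneous first-order equation via the integrating factor $1/\sin\psi$, and handle the two arcs by the antiperiodicity $p(\varphi+\pi)=-p(\varphi)$.
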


\begin{proof} Since $\dim A_K\le 1$, all middle sets $Z_K(u)$ of $K$ lie in some line $L$, and without loss of generality we may assume that this is the line $L=\{y\in {\mathbb R}^2: \langle y,e_1\rangle =0\}$. We want to show that, in fact, $A_K$ is one-pointed.

Let $\varphi\in (0,\pi)$. The middle line $M_K(\ur(\varphi))$ intersects the line $L$ in a single point. Since all middle sets of $K$ are contained in the line $L$, the middle set $Z_K(\ur(\varphi))$ is one-pointed, hence $s_K(\ur(\varphi))=t_K(\ur(\varphi))$. By (\ref{3.1}) and (\ref{3.2}), the function $p$ is differentiable at $\varphi$, and (\ref{3.3}) gives
$$ t_K(\ur(\varphi))= p(\varphi)\ur(\varphi)+p'(\varphi)\ur'(\varphi).$$
Since $t_K(\ur(\varphi))\in L$, we have $\langle t_K(\ur(\varphi)),e_1\rangle=0$ and therefore
$$ p(\varphi)\cos\varphi -p'(\varphi)\sin\varphi=0.$$
It follows that on $(0,\pi)$ the function $p$ is of class $C^2$ and then that $(p+p'')(\varphi)=0$ for $\varphi\in (0,\pi)$. The general solution of the differential equation  $(p+p'')(\varphi)=0$  is given by $p(\varphi)=\langle c,\ur(\varphi)\rangle$ with a constant vector $c$; by continuity of $p$, the latter holds then also for $\varphi=0$. Choosing $c$ as the origin, we see from (\ref{3.0}) that $h(\varphi+\pi)=h(\varphi)$ for $[0,\pi)$, hence $K$ is centrally symmetric. 
\end{proof}

Our last lemma produces convexity points.

\begin{lemma}\label{L6} 
Suppose that $K\in{\mathcal K}^2$ has no pair of parallel edges. Then each exposed point of the convex body $A_K$ is a convexity point of $K$.
\end{lemma}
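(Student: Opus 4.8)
My plan is to verify the criterion of Lemma~\ref{L2} by encoding its two conditions in a single scalar function. Let $z$ be an exposed point of $A_K$, and let $\ur(\psi)$ be an outer normal vector exposing it, so that $\langle x,\ur(\psi)\rangle<\langle z,\ur(\psi)\rangle$ for every $x\in A_K\setminus\{z\}$. By Lemma~\ref{L2} it suffices to show $z\in M_K(\ur(\varphi))\Rightarrow z\in Z_K(\ur(\varphi))$ for all $\varphi$. I introduce
\[ G(\varphi):=\langle z,\ur(\varphi)\rangle-p(\varphi). \]
By (\ref{3.00}), the condition $z\in M_K(\ur(\varphi_0))$ is exactly $G(\varphi_0)=0$. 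Since $\langle z,\ur(\cdot)\rangle$ is smooth and $p$ has one-sided derivatives, $G$ has them too, with $G'_r(\varphi_0)=\langle z,\ur'(\varphi_0)\rangle-p'_r(\varphi_0)$ and $G'_l(\varphi_0)=\langle z,\ur'(\varphi_0)\rangle-p'_l(\varphi_0)$. When $G(\varphi_0)=0$, equation (\ref{3.00}) gives $z=p(\varphi_0)\ur(\varphi_0)+\langle z,\ur'(\varphi_0)\rangle\,\ur'(\varphi_0)$, while (\ref{3.3}) and (\ref{3.4}) describe $Z_K(\ur(\varphi_0))$ as the set of points $p(\varphi_0)\ur(\varphi_0)+\sigma\,\ur'(\varphi_0)$ with $\sigma$ ranging between $p'_l(\varphi_0)$ and $p'_r(\varphi_0)$. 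Hence $z\in Z_K(\ur(\varphi_0))$ if and only if $G'_l(\varphi_0)\,G'_r(\varphi_0)\le 0$. The goal therefore becomes: \emph{at every zero $\varphi_0$ of $G$ one has $G'_l(\varphi_0)\,G'_r(\varphi_0)\le 0$}; that is, $G$ has no zero at which the two one-sided derivatives share a common nonzero sign (no ``transversal'' zero).

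Next I record two structural facts. Writing $L:=2z-K$ and using $h_L(\ur(\varphi))=2\langle z,\ur(\varphi)\rangle+h_K(-\ur(\varphi))$ together with (\ref{3.0}) yields the identity $2G(\varphi)=h_L(\ur(\varphi))-h_K(\ur(\varphi))$. Because reflection preserves widths, $G$ is $\pi$-antiperiodic, $G(\varphi+\pi)=-G(\varphi)$; so $G$ is not of one sign, a zero marks a common supporting line of $K$ and $L$, and a transversal zero marks a transversal crossing of $\mathrm{bd}\,K$ and $\mathrm{bd}\,L$, i.e.\ exactly the configuration producing the point of $\mathrm{conv}(K\cup L)$ outside $K\cup L$ in Lemmas~\ref{L1} and~\ref{L2}. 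The exposing property already delivers one good zero: since $A_K=\mathrm{conv}\bigcup_\varphi Z_K(\ur(\varphi))$ and $\varphi\mapsto Z_K(\ur(\varphi))$ is compact-valued and upper semicontinuous, the maximum of $\langle\cdot,\ur(\psi)\rangle$ over $A_K$ is attained on some middle set, and by uniqueness of the maximizer $z\in Z_K(\ur(\varphi^{*}))$ for some $\varphi^{*}$, realized as the endpoint of that middle set on which $\langle\cdot,\ur(\psi)\rangle$ is largest.

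The main step, and the part I expect to be the genuine obstacle, is to promote this single good zero to \emph{all} zeros. The route I would take is to prove that $\{\varphi:G(\varphi)>0\}$ is a single open arc; by the antiperiodicity it then has length at most $\pi$, its antipode is $\{\varphi:G(\varphi)<0\}$, and on the two complementary closed arcs $G$ must vanish identically, so that at each endpoint of the positivity arc one one-sided derivative of $G$ vanishes and the criterion $G'_l\,G'_r\le 0$ holds; no transversal zero can then survive. To forbid a hypothetical transversal zero $\varphi_1$ directly, I would argue from uniqueness of the farthest point: a transversal zero places $z$ strictly beyond the segment $Z_K(\ur(\varphi_1))$ along the middle line (so $G'_l(\varphi_1),G'_r(\varphi_1)$ share a nonzero sign), and, combined with the good zero $\varphi^{*}$ and the continuity of the endpoint curves $s_K,t_K$ supplied by Lemma~\ref{L4}, this should force a point of $A_K$ distinct from $z$ that again attains the value $\langle z,\ur(\psi)\rangle$ in direction $\ur(\psi)$, contradicting that $\ur(\psi)$ exposes $z$.

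The delicate point is converting the transversal crossing of $\mathrm{bd}\,K$ and $\mathrm{bd}\,L$ into such a second maximizer cleanly and in full generality, in particular in the non-smooth situations where the middle sets are honest segments and where $s_K,t_K$ may coincide only in the limit. The available tools for this are precisely the reflection identity $2G=h_L-h_K$, the antiperiodicity $G(\varphi+\pi)=-G(\varphi)$, and the explicit representation (\ref{3.3})--(\ref{3.4}) of $s_K$ and $t_K$; I expect the proof to hinge on tracking how the farthest point of $A_K$ in direction $\ur(\psi)$ moves as $\psi$ ranges over the normal cone of $A_K$ at $z$.
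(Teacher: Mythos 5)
Your setup is sound and, in fact, equivalent to the paper's: reducing to the criterion of Lemma \ref{L2}, encoding $z\in M_K(\ur(\varphi))$ as the vanishing of $G(\varphi)=\langle z,\ur(\varphi)\rangle-p(\varphi)$, and characterizing $z\in Z_K(\ur(\varphi_0))$ at a zero $\varphi_0$ by $G'_r(\varphi_0)\le 0\le G'_l(\varphi_0)$ (your product form $G'_lG'_r\le 0$ is equivalent because $G'_l-G'_r=p'_r-p'_l=\langle t_K-s_K,\ur'\rangle\ge 0$) is a correct repackaging of (\ref{3.00}), (\ref{3.3}), (\ref{3.4}). The identity $2G=h_{2z-K}-h_K$, the antiperiodicity, and the existence of one ``good zero'' $\varphi^*$ from exposedness are all correct. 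But the proof stops exactly where the actual work begins: the claim that $\{\varphi:G(\varphi)>0\}$ is a single open arc (equivalently, that no transversal zero exists) is announced as ``the route I would take,'' and the fallback argument is phrased as ``should force'' and ``I expect.'' Nothing in the proposal supplies a mechanism for either; in particular, exposedness of $z$ enters only through the existence of $\varphi^*$, which by itself cannot rule out transversal zeros elsewhere (for a non-exposed point of $A_K$ one also has such a $\varphi^*$, yet the conclusion may fail).

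The missing idea is the paper's monotonicity argument, and it is worth seeing why your function $G$ alone does not deliver it: at points of differentiability, Lemma \ref{L4} gives $G'(\varphi)=\langle z-s_K(\ur(\varphi)),\ur'(\varphi)\rangle$, whose sign varies with $\varphi$ because the reference direction $\ur'(\varphi)$ rotates. The paper fixes the frame instead: place $z$ at the origin, let $e_2$ be the exposing direction, so $\langle x,e_2\rangle>0$ for all $x\in A_K\setminus\{0\}$, and track $f(\varphi)=p(\varphi)/\cos\varphi$, the coordinate of the point where $M_K(\ur(\varphi))$ meets the fixed line through $z$ orthogonal to $e_2$. Then (\ref{3.7}) gives $f'(\varphi)=\langle s_K(\ur(\varphi)),e_2\rangle/\cos^2\varphi$, and since $s_K(\ur(\varphi))\in Z_K(\ur(\varphi))\subset A_K$, exposedness forces $f'\ge 0$, with strict inequality whenever $M_K(\ur(\varphi))$ misses $z$. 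Hence $f$ is strictly increasing on every component of $\{f\ne 0\}$; a component with both endpoints in the zero set is then impossible, so the zero set of $f$ is a single interval, on whose relative interior $s_K=z\in Z_K$, and the hypothesis of Lemma \ref{L2} follows. This conversion of the exposing inequality into a pointwise sign for a derivative --- by dividing out $\cos\varphi$, i.e., by measuring along a fixed line rather than along the rotating normal --- is precisely the step your outline needs and does not contain; without it, the proposal is a correct reduction plus an unproven claim, not a proof.
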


\begin{proof} Without loss of generality, we assume that $0$ is an exposed point of $A_K$ and that the orthonormal basis $(e_1,e_2)$ of ${\mathbb R}^2$ has been chosen such that 
\begin{equation}\label{3.6}
\langle x,e_2\rangle >0\quad\mbox{for each }x\in A_K\setminus\{0\}.
\end{equation} 
We denote by $L$ the line through $0$ that is spanned by $e_1$.

We intend to apply Lemma \ref{L2} to the point $0$. For that, we have to show that $0\in M_K(\ur(\varphi))$, for some $\varphi\in [0,\pi)$, can only hold if $0\in Z_K(\ur(\varphi))$.

Let $\varphi\in (-\pi/2,\pi/2)$. The middle line $M_K(\ur(\varphi))$ intersects the line $L$ in a point which we write as $f(\varphi)e_1$. The function $f$ thus defined is continuous. From (\ref{3.00}) we see that
$$ f(\varphi)=\frac{p(\varphi)}{\cos\varphi}.$$
If $f$ is differentiable at $\varphi$, this yields
\begin{equation}\label{3.7} 
f'(\varphi) = \frac{p'(\varphi)\cos\varphi+p(\varphi)\sin\varphi}{\cos^2\varphi}=\frac{\langle s_K(\ur(\varphi)), e_2\rangle} {\cos^2\varphi}.
\end{equation}

The set
$$ N:= \{\varphi\in (-\pi/2,\pi/2): f(\varphi)\not=0\}$$
is the union of open intervals $I_j$, $j\in J$, where $J$ is finite or countable. 

Since $0$ is an exposed point of ${\rm conv}\bigcup_{u\in{\mathbb S}^1} Z_K(u)$, it must be an exposed point of some middle set $Z_K(\ur(\varphi_0))$, with suitable $\varphi_0\in (-\pi/2,\pi/2)$. Let $N^c_0$ be the connected component of $(-\pi/2,\pi/2)\setminus N$ that contains $\varphi_0$. If $N^c_0=\{\varphi_0\}$, then $0\in M_K(\ur(\varphi))$ for $\varphi \in N^c_0$. If $N^c_0$ is an interval of positive length, we have $f(\varphi)=0$ for $\varphi\in N^c_0$, and we deduce from (\ref{3.7}) that $\langle s_K(\ur(\varphi)), e_2\rangle=0$ and hence from (\ref{3.6}) that $s_K(\ur(\varphi))=0$ for $\varphi\in{\rm relint}N^c_0$. It follows again that $0\in M_K(\ur(\varphi))$ for $\varphi\in N^c_0$.

Now let $j\in J$. Let $\varphi\in I_j$ be an angle such that $f$ is differentiable at $\varphi$. Then (\ref{3.7}) and (\ref{3.6}) give $f'(\varphi)>0$, since $M_K(\ur(\varphi))$ does not pass through $0$ and hence $s_K(\ur(\varphi)) \not =0$. With the exception of countably many points in $(-\pi/2,\pi/2)$, the function $h$ is differentiable at $\varphi \in(-\pi/2,\pi/2)$ and at $\varphi+\pi$, hence $p$ and thus $f$ is differentiable everywhere with countably many exceptions. We conclude that the function $f$ (which is locally Lipschitz and hence the integral of its derivative) is strictly increasing in $I_j$. But this implies that $(-\pi/2,\pi/2)\setminus N$ consists of a single closed interval, namely $N^c_0$. Therefore, no middle line $M_K(\ur(\varphi))$ with $\varphi\in(-\pi/2,\pi/2)\setminus N^c_0$ passes through $0$. The middle line $M_K(\ur(\pi/2))$ is parallel to $L$ and distinct from it and hence also does not pass through $0$. Now it follows from Lemma \ref{L2} that $0$ is a convexity point of $K$. 
\end{proof}

To complete the proof of the Theorem, we note that by Lemma \ref{L3} it suffices to prove it for a convex body $K$ without a pair of parallel edges. Assuming that $K$ is not centrally symmetric, we conclude from Lemma \ref{L5} that $A_K$ is two-dimensional. Since every convex body is the closed convex hull of its set of exposed points (e.g., \cite{Sch14}, Thm. 1.4.7), $A_K$ must have three affinely independent exposed points, and by Lemma \ref{L6}, these are convexity points of $K$.

\noindent Author's address:\\[2mm]
Rolf Schneider\\
Mathematisches Institut, Albert-Ludwigs-Universit{\"a}t\\
D-79104 Freiburg i. Br., Germany\\
E-mail: rolf.schneider@math.uni-freiburg.de

\end{document}